\newtheorem{thm}{Theorem}[section]
\newtheorem{lem}{Lemma}[section]
\title
{$L^p$-norm estimate for the Bergman projection on  Hartogs triangle}
\author{\normalsize Tomasz Beberok \\
\small Faculty of Mathematics and Computer Science, Jagiellonian University,\\
\small Lojasiewicza 6, 30-048 Krakow, Poland \\}
\date{}
\begin{document}

\begin{center}
  \textbf{On $L^p$ Markov type inequality for some cuspidal domains in $\mathbb{R}^2$
}
\end{center}
\vskip1em
\begin{center}
  Tomasz Beberok
\end{center}

\vskip2em

\noindent \textbf{Abstract.} The purpose of this paper is to study a Markov type inequality for algebraic polynomials in $L^p$ norm on two-dimensional cuspidal domains.
\vskip1em

\noindent \textbf{Keywords:}  Multivariate polynomials; Cuspidal sets; $L^p$ norm; Markov inequality
\vskip1em
\noindent \textbf{AMS Subject Classifications:} primary 41A17, secondary 41A63\\

\section{Introduction}
\label{} In the space $\mathbb{R}^d$ we consider the Euclidean norm: $|\mathbf{x}|:=\sqrt{|x_1|^2 + \cdots |x_d|^2}$, where
$x = (x_1, \ldots, x_d)$. For a nonempty compact set $E \subset \mathbb{R}^n$, $1 \leq p < \infty$ and $h: E \rightarrow \mathbb{R}$  for which the $p$th power of the absolute value is Lebesgue integrable, we put
$$\|h\|_{L^p(E)}:=\left( \int_E |h(x)|^p \, dx \right)^{1/p}.$$
 If two sequences $z_n$ and $w_n$ of real numbers have the property that $w_n \neq 0$ and the sequence $|z_n|/|w_n|$ has finite positive limit as $n \rightarrow \infty$, we write $z_n \sim w_n$. Throughout the paper, $\mathcal{P}_n(\mathbb{R}^d)$ denotes the space of real algebraic polynomials of $d$ variables and degree at most $n$ and $P_n^{(\alpha,\beta)}$ denotes the Jacobi  polynomial of degree $n$ associated to parameters $\alpha, \beta$. Moreover, $\mathbb{N}:=\{1,2,3, \ldots\}$ and $\mathbb{N}_0:= \mathbb{N} \cup \{0\}$.
\vskip0.3cm
\noindent \textbf{Definition.} Let $1 \leq p < \infty$. We say that a compact set $\emptyset \neq E \subset \mathbb{R}^d$ satisfies \textbf{ $L^p$ Markov type inequality} (or: is a \textbf{$L^p$ Markov set}) if there exist $\kappa,C > 0$ such that, for each polynomial $P \in \mathcal{P}_n(\mathbb{R}^d)$ and each $1 \leq j \leq d$,
\begin{align}\label{Markov}
  \left\|\frac{\partial P}{ \partial x_j} \right\|_{L^p(E)}  \leq C n^{\kappa} \|P\|_{L^p(E)}.
\end{align}
We denote by $B(\mathbf{a},r) \subset \mathbb{R}^d$ the closed Euclidean ball with center $\mathbf{a}$ and radius $r$, and $\mathbb{S}^{d-1} = \{\mathbf{x} \in \mathbb{R}^d : |\mathbf{x}| = 1\}$ is the unit sphere. For any $r > 0$, $\mathbf{a} \in  \mathbb{R}^d$ and $\mathbf{u} \in \mathbb{S}^{d-1}$ the cylinder $L_{\mathbf{a}}(r, \mathbf{u})$ with center $\mathbf{a}$, radius $r > 0$,  and axis $\mathbf{u}$ is given
by
$$L_{\mathbf{a}}(r,\mathbf{u}):=\{ \mathbf{x} \in \mathbb{R}^d : |\mathbf{x}-\mathbf{a}|^2 < r^2 + \langle  \mathbf{x}-\mathbf{a}, \mathbf{u} \rangle^2 \}.$$
Furthermore, $l_{\mathbf{x}}(\mathbf{u})$ will denote the line in $\mathbb{R}^d$ in direction $\mathbf{u} \in \mathbb{S}^{d-1}$ through point $\mathbf{x} \in \mathbb{R}^d$.
\newline \indent  Following Kro\'{o} \cite{K2}, we introduce a graph domain with respect to the cylinder $L_{\mathbf{a}}(r,\mathbf{u})$ and a piecewise graph domain.
  \vskip0.3cm
\noindent \textbf{Definition.} $K$ is called a \textbf{graph domain} with respect to the cylinder $L_{\mathbf{a}}(r,\mathbf{u})$ if for every $\mathbf{x} \in B(\mathbf{a},r)$ we have that $l_{\mathbf{x}}(\mathbf{u}) \cap K = [A_1(\mathbf{x}),A_2(\mathbf{x})]$ with $A_i(\mathbf{x})$, $i=1,2$ being continuous for $\mathbf{x} \in B(\mathbf{a},r)$ and
$$\delta_r (\mathbf{a},\mathbf{u}):= \inf_{\mathbf{x} \in B(\mathbf{a},r)}  |A_1(\mathbf{x})- A_2(\mathbf{x})| >0.$$
Moreover, $K \subset \mathbb{R}^d$ is a \textbf{piecewise graph domain} if it can be covered by finite number of cylinders so that $K$ is a graph domain with respect to each of them. \newline \indent
Similarly to \cite{K2} $\omega_K(\cdot)$ denotes the modulus of continuity of the boundary of piecewise graph domain $K$ which is defined as the maximum of modula of continuity of all functions $A_i (\cdot)$ involved in the corresponding finite covering by cylinders. If $\epsilon:=\epsilon_n(K)$ is a solution of the equation
$$2n^2 \omega_K\left( \frac{\epsilon}{n^2} \right)=1, \quad n \in \mathbb{N},$$ then the main result of the mentioned paper of Kro\'{o} is

\begin{thm}\label{KrooThm}
  Let $K \subset \mathbb{R}^d$ be a cuspidal piecewise graph domain. Then there exists a positive constant $B$, depending on $K$ and on $p$, such that for $Q \in \mathcal{P}_n(\mathbb{R}^d)$, $n \in \mathbb{N}$,
  \begin{align*}
    \| \nabla Q \|_{L^p(K)} \leq B \frac{n^2}{\epsilon_n} \| Q \|_{L^p(K)}.
  \end{align*}
Here $\nabla Q := \max_{1\leq j \leq d} \left| \frac{\partial Q}{ \partial x_j} \right|$. In particular, if $K$ is $Lip \gamma$, $0< \gamma <1$ then
  \begin{align*}
    \| \nabla Q \|_{L^p(K)} \leq B n^{\frac{2}{\gamma}} \| Q \|_{L^p(K)}
  \end{align*}
and the exponent $\frac{2}{\gamma}$ is best possible.
\end{thm}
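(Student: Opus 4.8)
The plan is to localize via the finite covering, reduce to one–dimensional estimates along the cylinder axes, and then split the fibers into a \emph{fat} part, where a direct univariate $L^p$ Markov inequality is efficient, and a \emph{thin} cuspidal part, where the threshold $\epsilon_n$ is forced on us by the modulus of continuity $\omega_K$. First I would use that a piecewise graph domain $K$ is covered by finitely many cylinders $L_{\mathbf{a}_i}(r_i,\mathbf{u}_i)$, and that it suffices to bound each directional derivative $D_{\mathbf{u}_i}Q$: since $\nabla Q=\max_j|\partial Q/\partial x_j|$ and (after enlarging the covering so the axes span $\mathbb{R}^d$) every $e_j$ is a fixed linear combination of the $\mathbf{u}_i$, we have $\partial_j Q=\sum_i c_{ij}D_{\mathbf{u}_i}Q$ as polynomials, so the passage from directional to coordinate derivatives costs only a constant depending on $K$. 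Fixing one graph-domain cylinder and choosing orthonormal coordinates $(\mathbf{x},t)$ with $t$ along $\mathbf{u}=\mathbf{u}_i$ and $\mathbf{x}\in B(\mathbf{a},r)$, the fiber over $\mathbf{x}$ is $I_{\mathbf{x}}=[A_1(\mathbf{x}),A_2(\mathbf{x})]$, and Fubini gives
\[
\int_K|D_{\mathbf{u}}Q|^p\,d\mathbf{x}\,dt=\int_{B(\mathbf{a},r)}\Big(\int_{I_{\mathbf{x}}}|\partial_t Q(\mathbf{x},t)|^p\,dt\Big)d\mathbf{x},
\]
and one sums such contributions over the tubes making up the covering.

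On each fiber $t\mapsto Q(\mathbf{x},t)$ is a univariate polynomial of degree $\le n$, so the classical $L^p$ Markov inequality on an interval of length $\ell(\mathbf{x}):=|A_2(\mathbf{x})-A_1(\mathbf{x})|$ yields $\int_{I_{\mathbf{x}}}|\partial_t Q|^p\,dt\le (Cn^2/\ell(\mathbf{x}))^p\int_{I_{\mathbf{x}}}|Q|^p\,dt$. Splitting the base ball into the fat part $F=\{\mathbf{x}:\ell(\mathbf{x})\ge\epsilon_n\}$ and the thin part $S=\{\mathbf{x}:\ell(\mathbf{x})<\epsilon_n\}$, the integral over the fibers above $F$ is immediately bounded by $(Cn^2/\epsilon_n)^p\|Q\|_{L^p(K)}^p$, which is exactly the target. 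The whole difficulty is thus concentrated above $S$, where the weight $\ell(\mathbf{x})^{-p}$ is unboundedly large and the naive fiberwise estimate fails.

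For the thin region I would exploit the defining relation $2n^2\omega_K(\epsilon_n/n^2)=1$, i.e. $\omega_K(\epsilon_n/n^2)=1/(2n^2)$: the scale $\rho:=\epsilon_n/n^2$ is precisely the amount one may move the base point $\mathbf{x}$ so that each endpoint $A_i$ changes by no more than the Markov fineness $1/(2n^2)$ of a degree-$n$ polynomial. Covering $S$ by base-balls of radius $\rho$ with bounded overlap, over each such ball both endpoint functions oscillate by at most $1/(2n^2)$, so the fibers above it lie in a common enveloping interval and share a common core, differing only by end-pieces of total length $\le 1/n^2$. The plan is then to compare, on each small tube, the fiberwise integral of $|\partial_t Q|^p$ with a two-dimensional integral of $|Q|^p$ over the fattened neighborhood: because the endpoint discrepancy sits exactly at the Markov scale, a univariate Markov-type estimate absorbs the near-endpoint pieces into the interior mass, after which the effective interval length is $\gtrsim\epsilon_n$ and the weight is controlled by $\epsilon_n^{-p}$; summing over the finitely overlapping tubes recovers $(Cn^2/\epsilon_n)^p\|Q\|_{L^p(K)}^p$. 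I expect this comparison step — transferring mass off the degenerate fibers onto a fat reference region while keeping the constant uniform in $n$ — to be the main obstacle, since it is where the one–dimensional Markov inequality and the geometry encoded by $\omega_K$ must be reconciled.

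Finally, for the $Lip\,\gamma$ corollary I would substitute $\omega_K(t)\sim t^\gamma$ into $2n^2\omega_K(\epsilon_n/n^2)=1$, getting $(\epsilon_n/n^2)^\gamma\sim n^{-2}$, hence $\epsilon_n\sim n^{2-2/\gamma}$ and $n^2/\epsilon_n\sim n^{2/\gamma}$, which turns the general bound into $\|\nabla Q\|_{L^p(K)}\le Bn^{2/\gamma}\|Q\|_{L^p(K)}$. To see that $2/\gamma$ is best possible I would fix a model cusp of type $Lip\,\gamma$, with boundary behaving like $t=\pm s^{\gamma}$ near the tip, and test the inequality on an explicit family $Q_n$: take a rescaled Chebyshev polynomial in the axis variable $t$ — extremal for the univariate $L^p$ Markov inequality — fitted to a fiber of length $\sim\epsilon_n$, i.e. localized near the scale $s_*$ with $s_*^{\gamma}\sim\epsilon_n$, multiplied by a low-degree localizing factor in $s$. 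Computing both norms directly, the small measure of the cusp keeps $\|Q_n\|_{L^p(K)}$ small while the thin fiber makes $\partial_t Q_n$ of order $n^2/\epsilon_n$, yielding the matching lower bound $\|\nabla Q_n\|_{L^p(K)}\ge c\,n^{2/\gamma}\|Q_n\|_{L^p(K)}$ and hence optimality of the exponent.
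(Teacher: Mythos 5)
First, a point of order: the paper never proves this statement. It is quoted as the main result of Kro\'{o} \cite{K2} and is used later only as a black box (e.g.\ to derive (\ref{corKroo})), so there is no internal proof to compare yours against; I am judging your proposal on its own terms. On those terms there is a genuine, structural gap, and it begins with a misreading of the geometry. In the paper's definition of a graph domain, the fibers \emph{along the cylinder axis} satisfy $\inf_{\mathbf{x}\in B(\mathbf{a},r)}|A_1(\mathbf{x})-A_2(\mathbf{x})|=\delta_r(\mathbf{a},\mathbf{u})>0$; so with your notation $\ell(\mathbf{x})\geq\delta_r$ for \emph{every} base point, and since $\epsilon_n\to 0$, your thin set $S=\{\mathbf{x}:\ell(\mathbf{x})<\epsilon_n\}$ is empty for all large $n$. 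The fat/thin dichotomy around which you organize the proof never occurs in the fibration to which $A_1,A_2$ and $\omega_K$ actually refer: a cusp is not a place where the axis-fibers get short, it is a place where the endpoint functions $A_i$ fail to be Lipschitz.

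The consequence is fatal for your reduction step. What your fiberwise univariate Markov argument really proves is $\int_{K\cap L_{\mathbf{a}_i}(r_i,\mathbf{u}_i)}|D_{\mathbf{u}_i}Q|^p\leq (Cn^2/\delta_{r_i})^p\int_K|Q|^p$, a pure $n^2$ bound, valid only on the portion of $K$ fibered by that cylinder. If, as you claim, one could pass from the directional derivatives $D_{\mathbf{u}_i}Q$ to the coordinate derivatives via $\partial_j Q=\sum_i c_{ij}D_{\mathbf{u}_i}Q$ at the cost of a constant, you would have proved $\|\nabla Q\|_{L^p(K)}\leq Cn^2\|Q\|_{L^p(K)}$ for \emph{every} cuspidal piecewise graph domain, contradicting the sharpness of the exponent $2/\gamma>2$ asserted in the very theorem you are proving. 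The identity $\partial_j Q=\sum_i c_{ij}D_{\mathbf{u}_i}Q$ does hold pointwise, but your bound for $D_{\mathbf{u}_i}Q$ is available only on $K\cap L_{\mathbf{a}_i}(r_i,\mathbf{u}_i)$, and you have no control of $D_{\mathbf{u}_i}Q$ on the rest of $K$, so the identity cannot be integrated over $K$. Concretely, for the model cusp $K=\{(x,y):0\leq x\leq 1,\ |y|\leq x^{1/\gamma}\}$, the only cylinder whose graph structure covers the tip has horizontal axis and controls $\partial_x Q$ there, while the vertical-axis cylinder that controls $\partial_y Q$ covers only a region $\{x\geq x_0\}$ away from the tip. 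Estimating $\int|\partial_y Q|^p$ over the cuspidal region $\{x<x_0\}$ is precisely where the factor $n^2/\epsilon_n$ (rather than $n^2$) must be produced, and it is precisely the step your ``absorption into the fattened neighborhood'' paragraph gestures at without supplying: nothing there explains why the effective transverse length near the tip should be $\gtrsim\epsilon_n$ when the true transverse fibers there are far shorter than $\epsilon_n$. You flag this yourself as ``the main obstacle,'' but it is not an obstacle within your proof --- it \emph{is} the theorem. The closing sharpness sketch is likewise only a sketch; for comparison, when this paper needs such lower bounds (Theorem \ref{mainKroo}) it obtains them not from localized Chebyshev polynomials but from test polynomials $P_n(x,y)=yP_n^{(\alpha,\beta)}(x)$ analyzed via Jacobi estimates and Mehler--Heine asymptotics.
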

\noindent The above theorem is a particular result in the general problem of estimating the exponent of the growth rate (with respect to the
degree $n$) of the best comparability constant of the semi-norm  $\| \nabla \cdot \|_{L^p(\Omega)}$
and the norm $\| \cdot \|_{L^p(\Omega)}$ acting on the space $\mathcal{P}_n(\mathbb{R}^d)$ for a given compact set $\Omega$. More precisely, the \textbf{Markov exponent in $L^p$-norm of a $L^p$ Markov set $K$} is defined as the infimum of $l$ as $l$ ranges over all positive numbers such that there exists a constant $C>0$, independent of $n$, with the property that $L^p$ Markov type inequality (\ref{Markov}) holds (with $l$ and $C$), which we denote by~$\mu_p(K)$.
\newline  \indent The notion of Markov exponent (in the supremum norm) appears first in \cite{BP}. The Markov exponent has many interesting applications in approximation theory, constructive function theory and in analysis (for instance, to Sobolev inequalities or Whitney-type extension problems see \cite{BOS}, \cite{PP} and \cite{P1}). It is known that $\mu_p(K) \geq 2$ for every compact subset $E$ of $\mathbb{R}^d$. In \cite{G2} it is proved that if $K$ is a locally Lipschitzian compact subsets of $\mathbb{R}^d$, then $\mu_p(K)=2$. See also \cite{MB2}, \cite{G3},  \cite{G5} and \cite{SMN}. In the case of cuspidal domains, see \cite{K1} and \cite{KS}.   Markov’s inequality and its various generalizations were studied in a large number of papers, it is beyond the scope of this paper to give a complete bibliography, an extensive survey of the results is given in \cite{BE}, \cite{MM}, \cite{RS} and \cite{P2}.
\newline \indent
One of the purposes of this note is to show that, if $d=2$, the factor $\frac{n^2}{\epsilon_n}$ is best possible for larger class of domains then $Lip \gamma$. Another goal is to prove that for every sequence $\{ \epsilon_n \}$, satisfying certain properties, there exist a compact set $D \subset \mathbb{R}^2$, a constant $M>0$ and a sequence of polynomials $P_n$ such that
\begin{align*}
  \frac{ \| \nabla P_n \|_{L^p(D)} } { \| P_n \|_{L^p(D)} }  \sim \frac{n^2}{\epsilon_n} \quad \text{and} \quad \| \nabla Q \|_{L^p(D)} \leq M \frac{n^2}{\epsilon_n}  \| Q \|_{L^p(D)}
\end{align*}
for any real algebraic polynomial $Q$ of two variables and degree at most $n$. Moreover, for every $\iota \geq 1$ and $1\leq p < \infty$ we give an example of connected compact subset $E_\iota$ of $\mathbb{R}^2$ such that $\mu_p(E_\iota)=2 \iota$ and the inequality (\ref{Markov}) does not hold with the exponent $\mu_p(E_\iota)$.

\section{A sharp Markov type inequality}
\noindent \textbf{Definition.} Let $f: (a,b) \rightarrow \mathbb{R}$ be a convex function. The \textbf{index of convexity} of $f$ is defined by \begin{align*}
             I_{conv}(f):= \sup \{r \geq 1 : (f)^{\frac{1}{r}} \text{ is convex}\}.
           \end{align*}
            \indent
For a given point $\mathbf{a} \in \mathbb{R}^2$ and a line $l_{\mathbf{b}}(\mathbf{u}) \subset \mathbb{R}^2$, $S_{l_{\mathbf{b}}(\mathbf{u}) }(\mathbf{a})$ stands for the point that is symmetric to the point $\mathbf{a}$ with respect to the line $l_{\mathbf{b}}(\mathbf{u})$. The point $\pi_{l_{\mathbf{b}}(\mathbf{u})} (\mathbf{a}) \in \mathbb{R}^2$ is the orthogonal projection of the point $\mathbf{a}$ onto the line $l_{\mathbf{b}}(\mathbf{u})$ i.e., $\pi_{l_{\mathbf{b}}(\mathbf{u})}(\mathbf{a}):=l_{\mathbf{b}}(\mathbf{u}) \cap l_{\mathbf{a}}(\mathbf{w}),$ where $\mathbf{w} \perp \mathbf{u}$. \newline
\indent
Let $K \subset \mathbb{R}^2$ be a piecewise graph domain. Suppose that $\mathbf{z} \in K$ is one of the strongest cuspidal point of $K$ i.e., there exists a cylinder $L_{\mathbf{a}}(r,\mathbf{u})$ such that $K$ is a graph domain with respect to it, $\mathbf{z}= A_2(\mathbf{b})$ for some $\mathbf{b} \in B(\mathbf{a},r)$ and for all sufficiently large $n$, $\omega_K(\epsilon_n/n^2)=|\mathbf{z}-A_2(\mathbf{g_n})|$ for some $\mathbf{g_n} \in B(\mathbf{a},r)$. Let $\mathbf{w} \in \mathbb{S}^{1},$ $\mathbf{w} \perp \mathbf{u}$. We say that $\mathbf{z}$ is regular if there exist $\mathbf{o} \in l_{\mathbf{b}}(\mathbf{u})$, and a function $f:[\mathbf{o},\mathbf{z}] \rightarrow \mathbb{R}^2$ such that $f(\mathbf{z})=\mathbf{z}$,
\begin{align*}
  &[f(\mathbf{x}), \pi_{l_{\mathbf{z}}(\mathbf{u})}(f(\mathbf{x})] \subset K \cap l_{\mathbf{x}}(\mathbf{w}) \subset [f(\mathbf{x}),S_{l_{\mathbf{z}}(\mathbf{u})} (f(\mathbf{x}))] \quad \text{for all } \mathbf{x} \in [\mathbf{o},\mathbf{z}], \\
  &d(t):= \text{dist}(l_{f((1-t)\mathbf{o} + t\mathbf{z})} (\mathbf{u}),  l_{\mathbf{z}} (\mathbf{u})  ) \quad  \text{is convex  on the interval }  (0,1) \,\,\, \text{and} \\
  & I_{conv}(d) < \infty.
\end{align*}

\begin{thm}\label{mainKroo}
  Let $K \subset \mathbb{R}^2$ be a piecewise graph domain. Suppose that one of the strongest cuspidal point of $K$  is regular.  If $\epsilon_n$ is a solution of the equation $2n^2 \omega_K\left( \frac{\epsilon_n}{n^2} \right)=1$, $n \in \mathbb{N}$, then there exist $\Psi>0$ and a sequence of polynomials $P_n \in \mathcal{P}_n(\mathbb{R}^2)$ such that
   \begin{align*}
     \| \nabla P_n \|_{L^p(K)} \geq \Psi \frac{n^2}{\epsilon_n}  \| P_n \|_{L^p(K)}.
   \end{align*}
\end{thm}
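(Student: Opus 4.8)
The plan is to exhibit the extremal polynomials explicitly in the product form $P_n(\mathbf{x})=\ell(\mathbf{x})\,R_n(\mathbf{x})$, where $\ell(\mathbf{x}):=\langle \mathbf{x}-\mathbf{z},\mathbf{w}\rangle$ is the (affine, hence degree-one) signed distance to the axis line $l_{\mathbf{z}}(\mathbf{u})$ through the cusp point, and $R_n$ is a polynomial of degree at most $n-1$ in the $\mathbf{u}$-coordinate that is concentrated near $\mathbf{z}$ on the scale $\omega_K(\epsilon_n/n^2)$. The virtue of this form is that the transverse ($\mathbf{w}$) derivative $\partial P_n/\partial x_2$ equals $R_n$, so its $L^p$-mass sits in the thin cusp, whereas $P_n$ itself carries the extra factor $\ell$, which is tiny there; since $\|\nabla P_n\|_{L^p(K)}\ge\|\partial P_n/\partial x_2\|_{L^p(K)}$, it suffices to make this single partial derivative large, and the built-in asymmetry is exactly what produces a ratio growing faster than $n^2$. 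Working in coordinates with $\mathbf{z}$ at the origin, $\mathbf{u}=e_1$, $\mathbf{w}=e_2$, the model to keep in mind is $K=\{0\le x\le1,\ |y|\le x^{s}\}$, where $\ell=y$, the transverse width is $\sim x^{s}$, and a direct computation with $R_n$ concentrated on $[0,x_0]$ gives $\|\partial_y P_n\|_{L^p}/\|P_n\|_{L^p}\sim x_0^{-s}$; choosing $x_0=\omega_K(\epsilon_n/n^2)$ turns this into $n^2/\epsilon_n$.

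I would then reduce the two-dimensional norms to weighted one-dimensional integrals by means of the regularity hypothesis. The inner inclusion $[f(\mathbf{x}),\pi_{l_{\mathbf{z}}(\mathbf{u})}(f(\mathbf{x}))]\subset K\cap l_{\mathbf{x}}(\mathbf{w})$ supplies, on each slice, a transverse segment of length comparable to $d(t)$ lying inside $K$; integrating $|\partial P_n/\partial x_2|^p=|R_n|^p$ over it gives a lower bound $\gtrsim\int |R_n(t)|^p\,d(t)\,dt$. The outer inclusion $K\cap l_{\mathbf{x}}(\mathbf{w})\subset[f(\mathbf{x}),S_{l_{\mathbf{z}}(\mathbf{u})}(f(\mathbf{x}))]$ bounds the transverse extent also by $\sim d(t)$, so that $\int_{K\cap l_{\mathbf{x}}(\mathbf{w})}|\ell|^p\lesssim d(t)^{p+1}$ and hence $\|P_n\|_{L^p(K)}^p\lesssim\int |R_n(t)|^p\,d(t)^{p+1}\,dt$. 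Here the convexity of $d$ together with $I_{conv}(d)<\infty$ is used to show that $d(t)$ is comparable to a fixed power of the distance to $\mathbf{z}$; this power is the cusp order $s$, comparable to $I_{conv}(d)$, it ties the defining relation $2n^2\omega_K(\epsilon_n/n^2)=1$ to the weight, and the finiteness of the index is precisely what excludes super-flat cusps where $d$ decays faster than every power and the power-type scaling would break down.

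It then remains to choose $R_n$ and estimate the ratio $\int|R_n|^p d\big/\int|R_n|^p d^{p+1}$. I would take $R_n$ to be a fast-decreasing polynomial of degree at most $n-1$ adapted to the endpoint, normalized so that $|R_n|\asymp1$ on the tip window of length $x_0=\omega_K(\epsilon_n/n^2)$ and decreasing like $\exp(-c\,n\sqrt{t})$ away from it; polynomials of degree $\sim n$ concentrate exactly at the scale $1/n^2\sim x_0$, which is how $\epsilon_n$ enters, and they can be realized from suitably scaled Jacobi polynomials $P_n^{(\alpha,\beta)}$. With the weight power-comparable, both integrals are dominated by the tip window, the numerator being of size $\sim x_0^{s+1}$ and the denominator of size $\sim x_0^{s(p+1)+1}$, whence the ratio is $\sim x_0^{-sp}$ and its $p$-th root is $\sim n^2/\epsilon_n$, which yields the constant $\Psi$.

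The main obstacle I anticipate is the interplay between concentration and the weight in the denominator: because $P_n$ carries the factor $\ell$, its $L^p$-norm is weighted by the high power $d(t)^{p+1}$, which on its own favours the fat part of $K$ away from the cusp. One must verify that the decay of $R_n$ is strong enough that even against this weight the mass of $P_n$ stays in the tip window, while being weak enough (no faster than the extremal $\exp(-c\,n\sqrt{t})$) to be realizable in degree $n$; matching these two requirements is what pins the concentration scale to $\omega_K(\epsilon_n/n^2)$ and is the quantitative heart of the argument. Transferring the clean model computation to a general regular cusp, where $d(t)$ is only power-comparable rather than an exact power and the slices are only sandwiched rather than exactly known, is the remaining technical burden, and it is carried out entirely through the two inclusions in the definition of regularity and the finiteness of $I_{conv}(d)$.
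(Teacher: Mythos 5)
Your overall strategy is the paper's strategy: the paper's extremal polynomial is exactly your product form, $P_n(x,y)=y\,P_n^{(\alpha,\beta)}(x)$ in normalized coordinates, so that $\partial P_n/\partial y=P_n^{(\alpha,\beta)}(x)$ is endpoint-concentrated while $P_n$ carries the small transverse factor; both norms are then reduced, via the slice sandwich in the definition of regularity, to one-dimensional integrals against the weights $d(t)$ and $d(t)^{p+1}$, and the defining equation $2n^2\omega_K(\epsilon_n/n^2)=1$ converts $d$ at the concentration scale into $\epsilon_n/n^2$. However, two of your specific claims are false, and the first one is a genuine gap: it would restrict your proof to exactly the class of cusps the theorem is not needed for. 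You assert that convexity of $d$ plus $I_{conv}(d)<\infty$ implies $d(t)$ is comparable to a fixed power $t^s$ of the distance to the cusp. This is not true: the paper's own Lemma 3.2 (the set $E_\iota$ with profile $\varphi(t)=t/(1+\ln(1/t))$) has finite index of convexity but is comparable to no power of $t$, which is precisely why its Markov factor acquires the logarithmic correction $n^{2\iota}(1+\iota\ln(2n^2))$; if power comparability held, the theorem would collapse to the $Lip\,\gamma$ case already settled by Kro\'{o}. What is actually available, and what the paper uses instead, is the one-sided differential inequality obtained from concavity of $f^{1/(I_{conv}(f)+\delta)}$, namely $-f'(x)(1-x)\le (I_{conv}(f)+\delta)f(x)$. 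This single inequality does all three jobs for which you invoke power comparability: it gives the lower bound of the tip integral ($\int$ over the tip of $d \gtrsim d(x_0)x_0$, the paper's (\ref{lbound3})), the relative growth control $d(t)\le d(x_0)(t/x_0)^{I_{conv}+\delta}$ away from the tip (the paper's (\ref{appindex}) fed into the integration by parts (\ref{IbyP})), and the lower bound $\epsilon_n/n^2\gtrsim n^{-2(I_{conv}+\delta)}$ (the paper's (\ref{lowEn})) needed to show the fat part of $K$ far from the cusp is negligible. Note that your final scaling survives the repair because the answer never depends on a cusp order $s$: the ratio is $\asymp 1/d(x_0)\approx n^2/\epsilon_n$.

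The second inaccuracy is in your construction of $R_n$: Jacobi polynomials do not decay like $\exp(-c\,n\sqrt{t})$ away from the endpoint. Relative to their endpoint value $P_n^{(\alpha,\beta)}(1)\sim n^{\alpha}$, they decay only polynomially, $|P_n^{(\alpha,\beta)}(\cos\theta)|\lesssim n^{-1/2}\theta^{-\alpha-1/2}$ (Szeg\H{o} (7.32.5), which is what the paper uses in (\ref{l2}) and (\ref{mid2})). Root-exponentially fast-decreasing polynomials of the kind you describe do exist (Ivanov--Totik theory), so your $R_n$ can be constructed and would even simplify the treatment of the fat part of $K$, but they are not "suitably scaled Jacobi polynomials." If you keep Jacobi polynomials, the fix is the paper's: choose $\alpha$ large enough that the polynomial decay order beats the weight growth, i.e.\ $\alpha p+p/2-2>2(p+1)I_{conv}(f)$ --- and this is the second place where finiteness of the index of convexity, rather than any power comparability, is exactly the hypothesis that makes a large enough finite $\alpha$ exist.
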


\begin{proof}
Without loss of generality we may suppose that $K \subset [0,1] \times [-1,1]$, $\mathbf{v}=(1,0)$ is the strongest cuspidal point of $K$ and $\{(x,y) \in \mathbb{R}^2 : \eta \leq x \leq 1, 0 \leq y \leq f(x)\} \subset K \subset [0,\eta] \times [-1,1] \cup \{(x,y) \in \mathbb{R}^2 : \eta \leq x \leq 1, -f(x) \leq y \leq f(x)\}$ for some  $0< \eta <1$ and a convex function $f: [\eta,1] \rightarrow \mathbb{R}$ with the property that $f(1)=0$ and $I_{conv}(f) < \infty$. (This can be achieved by shifting the point $\mathbf{o}$ into the origin,  rotating around the origin  and dilating the space by a proper constant.)
 Let $x_n=f^{-1}(\epsilon_n/n^2)$ for all sufficiently large $n$.  Then
\begin{align}\label{gen}
  \iint_K |y P^{(\alpha,\beta)}_n(x)|^p \,dxdy \leq& \int_{0}^{\eta} \int_{-1}^{1} |y P^{(\alpha,\beta)}_n(x)|^p \,dydx \nonumber \\&+ \int_{\eta}^{x_n}  \int_{-f(x)}^{f(x)} |y P^{(\alpha,\beta)}_n(x)|^p \,dydx \nonumber \\&+ \int_{x_n}^{1}  \int_{-f(x)}^{f(x)} |y P^{(\alpha,\beta)}_n(x)|^p \,dydx
\end{align}
\indent Our plan is to obtain the estimates of each integral on the right side. We start with the last one. It is clear that
\begin{align}\label{last}
  \int_{x_n}^{1} \int_{-f(x)}^{f(x)} |y P^{(\alpha,\beta)}_n(x)|^p \,dxdy =& \frac{2}{p+1}  \int_{x_n}^{1} (f(x))^{p+1} |P^{(\alpha,\beta)}_n(x)|^p \,dx \nonumber \\ &\leq \frac{2}{p+1} \left(\frac{\epsilon_n}{n^2} \right)^{p+1}  \int_{x_n}^{1}  |P^{(\alpha,\beta)}_n(x)|^p \,dx.
\end{align}
Then the change of variable $x=\cos \theta$ gives us
\begin{align*}
   \int_{x_n}^{1}  |P^{(\alpha,\beta)}_n(x)|^p \,dx = \int_{0}^{u_n}  |P^{(\alpha,\beta)}_n(\cos \theta)|^p \sin \theta \,d\theta.
\end{align*}
Here $u_n=\arccos x_n$. Since $\mathbf{v}=(1,0)$ is the strongest cuspidal point of $K$, it follows that  $x_n=1-\sqrt{\frac{1}{4n^4}-\frac{\epsilon_n^2}{n^4}}$. By the fact that $\epsilon_n \rightarrow 0$ there exists a natural number $n_0$ such that $1-\frac{1}{2n^2}\leq x_n \leq 1- \frac{1}{4n^2}$ for all $n \geq n_0$. Hence there exist a natural number $n_1$ and  positive constants $a,b$ such that $\frac{a}{n} \leq u_n \leq \frac{b}{n}$ for all $n \geq n_1$. Applying certain properties of Jacobi polynomials $P^{(\alpha,\beta)}_n(x)$ verified in [8], $(7.32.5)$, p. 169, we conclude that there exists a natural number $n_2$ so that
\begin{align}\label{l2}
  \int_{0}^{u_n}  |P^{(\alpha,\beta)}_n(\cos \theta)|^p \sin \theta \,d\theta \leq C n^{\alpha p} \int_{0}^{u_n} \theta \,d\theta \leq \frac{Cb^2}{2} n^{\alpha p-2}
\end{align}
for $n \geq n_2$ and appropriately adjusted constant $C$. Then by (\ref{last}) and (\ref{l2})
\begin{align}\label{l3}
  \int_{x_n}^{1} \int_{-f(x)}^{f(x)} |y P^{(\alpha,\beta)}_n(x)|^p \,dydx \leq \frac{Cb^2}{p+1} \epsilon_n^{p+1} n^{\alpha p-2p-4}
\end{align}
for all sufficiently large $n$. \newline \indent Now select $\alpha > -1$ such that $\alpha p + p/2 -2 > 2I_{conv}(f)(p+1)$. It is easy to see that
\begin{align}\label{mid}
  \int_{\eta}^{x_n}  \int_{-f(x)}^{f(x)} |y P^{(\alpha,\beta)}_n(x)|^p \,dydx= \frac{2}{p+1}  \int_{\eta}^{x_n} (f(x))^{p+1} |P^{(\alpha,\beta)}_n(x)|^p \,dx.
\end{align}
Let $\sigma = \arccos \eta$. Proceeding similarly as before, we obtain
\begin{align}\label{mid2}
  \ \frac{2}{p+1}  \int_{\eta}^{x_n} (f(x))^{p+1} |P^{(\alpha,\beta)}_n(x)|^p \,dx \leq \frac{2 \Lambda n^{-p/2}}{(p+1)} \int_{u_n}^{\sigma } (f(\cos \theta))^{p+1}  \theta^{-\alpha p - p/2 } \sin \theta \,d\theta
\end{align}
for appropriately adjusted constant $\Lambda$ and all sufficiently large $n$. Since $\sin x \leq x$ for $x \geq 0$, we have
\begin{align}\label{mid2a}
  \ \frac{2}{p+1}  \int_{\eta}^{x_n} (f(x))^{p+1} |P^{(\alpha,\beta)}_n(x)|^p \,dx \leq \frac{2\Lambda n^{-p/2}}{(p+1)} \int_{u_n}^{ \sigma } (f(\cos \theta))^{p+1}  \theta^{-\alpha p - p/2 +1} \,d\theta.
\end{align}
Integration by parts gives us
\begin{align}\label{IbyP}
   \int_{u_n}^{\sigma } (f(\cos \theta))^{p+1}  \theta^{-\alpha p - p/2 +1} \,d\theta=&\left[ \frac{(f(\cos \theta))^{p+1}  \theta^{-\alpha p - p/2 +2}}{-\alpha p - p/2 +2}\right]^{ \sigma}_{u_n} \nonumber \\ &+ \int_{u_n}^{ \sigma } \frac{ (p+1)(f(\cos \theta))^{p}  }{(-\alpha p - p/2 +2) \theta^{\alpha p + p/2 -2}} f'(\cos \theta) \sin \theta \,d\theta.
\end{align}
If $-1\leq x \leq 1$, then $\sqrt{1-x^2} \arccos x \leq 2(1-x)$. Hence
   \begin{align}\label{derE}
     - \lambda f'(\cos \lambda) \sin \lambda  \leq     -2(1-\cos \lambda)f'(\cos \lambda)
   \end{align}
whenever $\lambda \in (0,\sigma]$. From the definition of index of convexity of $f$ it follows that for each fixed $\delta >0$,
$(f)^{\frac{1}{I_{conv}(f) + \delta }}$ is concave. Hence
\begin{align*}
  (f(x))^{ \frac{1}{I_{conv}(f) + \delta }  } \geq - (f(x))^{ \frac{1}{I_{conv}(f) + \delta } -1 }  \frac{f'(x)(1-x)}{I_{conv}(f) + \delta }
\end{align*}
for any $x \in [\eta,1)$. Therefore
 \begin{align}\label{appindex}
   (I_{conv}(f) + \delta) f(x) \geq -f'(x)(1-x)
 \end{align}
for all $x\in [\eta,1]$. Then, by (\ref{derE}) and (\ref{appindex}),
\begin{align}\label{estderE}
  \int_{u_n}^{ \sigma } \frac{ (p+1)(f(\cos \theta))^{p}  \theta^{-\alpha p - p/2 +2}}{-\alpha p - p/2 +2} & f'(\cos \theta) \sin \theta \,d\theta \nonumber \\ & \leq \int_{u_n}^{ \sigma } \frac{ 2(p+1) (I_{conv}(f) + \delta) (f(\cos \theta))^{p+1} }{(\alpha p + p/2 - 2) \theta^{\alpha p + p/2 -1}}  \,d\theta .
\end{align}
Thus, by (\ref{IbyP}) and (\ref{estderE}),
\begin{align*}
  \int_{u_n}^{\sigma } (f(\cos \theta))^{p+1}  \theta^{-\alpha p - p/2 +1} \,d\theta &\leq \frac{(f(\cos u_n))^{p+1}  u_n^{-\alpha p - p/2 +2}}{\alpha p + p/2 - 2 - 2(p+1) (I_{conv}(f) + \delta)} \\ & \leq \frac{ \epsilon_n^{p+1}  a^{-\alpha p - p/2 +2} n^{\alpha p + p/2 -2p -4} }{\alpha p + p/2 - 2 - 2(p+1) (I_{conv}(f) + \delta)}
\end{align*}
whenever $\alpha p + p/2 -2 > 2(p+1) (I_{conv}(f) + \delta)$. Together with (\ref{mid}), (\ref{mid2}) and (\ref{mid2a}), this last estimate implies that for every $\alpha > -1$ such that $\alpha p + p/2 -2 > 2(p+1) I_{conv}(f)$ there exists a constant $C_1>0$, independent of $n$, with
 \begin{align}\label{midf}
  \int_{\eta}^{x_n}  \int_{-f(x)}^{f(x)} |y P^{(\alpha,\beta)}_n(x)|^p \,dydx \leq C_1 \epsilon_n^{p+1} n^{\alpha p-2p-4}.
\end{align}
 \newline \indent It now remains to prove that there exists a positive constant $C_2$, independent of $n$, such that
\begin{align*}
 \int_{0}^{\eta} \int_{-1}^{1} |y P^{(\alpha,\beta)}_n(x)|^p \,dydx \leq C_2 \epsilon_n^{p+1} n^{\alpha p-2p-4}.
\end{align*}
It is easy to verify that
\begin{align*}
 \int_{0}^{\eta} \int_{-1}^{1} |y P^{(\alpha,\beta)}_n(x)|^p \,dydx = \frac{2}{p+1} \int_{0}^{\eta} |P^{(\alpha,\beta)}_n(x)|^p \,dx.
\end{align*}
In a similar way as before, we can show that
\begin{align*}
  \int_{0}^{\eta} |P^{(\alpha,\beta)}_n(x)|^p \,dx \leq  \Lambda_1 n^{-p/2} \int_{\sigma}^{ \pi/2}   \theta^{-\alpha p - p/2 +1}  \,d\theta
\end{align*}
for appropriately adjusted constant $\Lambda_1$ and all sufficiently large $n$. Hence
\begin{align}\label{midI}
  \int_{0}^{\eta} \int_{-1}^{1} |y P^{(\alpha,\beta)}_n(x)|^p \,dydx   \leq   \frac { 2\Lambda_1 n^{-p/2}   \sigma^{-\alpha p - p/2 +2}} {(\alpha p + p/2 -2)(p+1)}.
\end{align}
Now let $f(\eta):=w$. For every $\delta >0$ define $h_{\delta}(x):=(1-x)^{I_{conv}(f) + \delta} \frac{w}{(1-\eta)^{I_{conv}(f) + \delta}}$. Then $f(\eta)=h_{\delta}(\eta)$ and $f(1)=h_{\delta}(1)$. By our assumption on $f$ it follows that
\begin{align*}
   (f(x))^{\frac{1} {I_{conv}(f) + \delta } } \geq w^{ \frac{1} {I_{conv}(f) + \delta }  } \frac{1-x}{1-\eta}= (h_{\delta}(x))^{\frac{1} {I_{conv}(f) + \delta } }
\end{align*}
for all $x \in [\eta,1]$. Thus
     \begin{align}\label{lowEn}
       \frac{\epsilon_n}{n^2}=f(x_n) \geq f(1-\frac{1}{4n^2}) \geq h_{\delta}(1-\frac{1}{4n^2}) \geq \frac{w}{(1-\eta)^{I_{conv}(f) + \delta} (2n)^{2I_{conv}(f) + 2\delta } }
     \end{align}
for all sufficiently large $n$.  Now if $\alpha$ is selected so that $2I_{conv}(f)(p+1) +2 - p/2 < \alpha p$, then, by (\ref{midI}) and (\ref{lowEn}), there exists $C_2>0$ such that
\begin{align}\label{left4}
  \int_{0}^{\eta} \int_{-1}^{1} |y P^{(\alpha,\beta)}_n(x)|^p \,dxdy   \leq C_2 \epsilon_n^{p+1} n^{\alpha p-2p-4}
\end{align}
for all  $n \in \mathbb{N}$. Now let $M=Cb^2/2 + C_1 + C_2$ and use the inequalities (\ref{gen}), (\ref{l3}), (\ref{midf}) and (\ref{left4}) to obtain
\begin{align}\label{ubound}
  \iint_K |y P^{(\alpha,\beta)}_n(x)|^p \,dxdy \leq M \epsilon_n^{p+1} n^{\alpha p-2p-4}.
\end{align}
 \indent
By our assumption on $K$ it follows that
\begin{align}\label{lbound}
  \left\|P^{(\alpha,\beta)}_n\right\|^p_{L^p(K)}  \geq \int_{1-\frac{1}{2n^2}}^{1}  \int_{0}^{f(x)} |P^{(\alpha,\beta)}_n(x)|^p \,dydx = \int_{1-\frac{1}{2n^2}}^{1} f(x) |P^{(\alpha,\beta)}_n(x)|^p \,dx.
\end{align}
By making the change of variable $x=1-\frac{z^2}{2n^2}$, we obtain
\begin{align}\label{lbound1}
  \int_{1-\frac{1}{2n^2}}^{1} f(x) |P^{(\alpha,\beta)}_n(x)|^p \,dx = \frac{1}{n^2}\int_{0}^{1}  z f\left(g_n(z)\right) |P^{(\alpha,\beta)}_n\left(g_n(z)\right)|^p  \,dz,
\end{align}
where $g_n(z)=1-\frac{z^2}{2n^2}$. Again certain properties of Jacobi polynomials $P^{(\alpha,\beta)}_n(x)$ play a role. By the formula of Mehler-Heine type (see \cite{Sz}, Theorem 8.1.1.) 
\begin{align*}
  \frac{1}{n^2}\int_{0}^{1}  z f\left(g_n(z)\right) |P^{(\alpha,\beta)}_n\left(g_n(z)\right)|^p  \,dz &\geq \\ \frac{n^{\alpha p}}{4^p n^2} \int_{0}^{1} z f(g_n(z)) &(4(z/2)^{-\alpha} J_{\alpha}(z)- 1/\Gamma(\alpha +2))^p \,dz
\end{align*}
for all sufficiently large $n$. Here $J_{\alpha}(z)$ is the Bessel functions of the first kind. Since
\begin{align*}
\min_{z \in [0,1]} \{(z/2)^{-\alpha} J_{\alpha}(z)\} \geq \min_{z \in [0,1]} \left\{ \frac{1}{\Gamma(\alpha +1)}  - \frac{z^2}{4\Gamma(\alpha +2)}  \right\} =  \frac{4 \alpha +3}{4\Gamma(\alpha +2)} ,
\end{align*}
we have
\begin{align}\label{lbound2}
  \frac{1}{n^2}\int_{0}^{1}  z f\left(g_n(z)\right) \left|P^{(\alpha,\beta)}_n\left(g_n(z)\right)\right|^p  \,dz \geq \left(\frac{4\alpha +2 }{4\Gamma(\alpha +2)} \right)^p n^{\alpha p-2} \int_{0}^{1} z f(g_n(z))  \,dz.
\end{align}
Applying integration by parts yields
\begin{align*}
  \int_{0}^{1} z f(g_n(z))  \,dz= \left[\frac{1}{2}z^2 f(g_n(z))\right]^{1}_{0} + \frac{1}{2n^2}\int_{0}^{1} z^3  f'(g_n(z))  \,dz.
\end{align*}
From this and the inequality (\ref{appindex}), it follows that, for all $\delta>0$, it must be that
\begin{align}\label{lbound3}
  (I_{conv}(f) + 1 + \delta)\int_{0}^{1} z f(g_n(z))  \,dz \geq \frac{1}{2} f(g_n(1)) \geq \frac{\epsilon_n}{2n^2}
\end{align}
for all sufficiently large $n$. If $n$ is large enough, then by (\ref{lbound}), (\ref{lbound1}), (\ref{lbound2}) and (\ref{lbound3}) there exists a positive constant $\Upsilon$, independent of $n$, for which
\begin{align}\label{lower}
  \iint_K |P^{(\alpha,\beta)}_n(x)|^p \,dxdy \geq \Upsilon \epsilon_n  n^{\alpha p-4}.
\end{align}
Finally, using the inequalities (\ref{ubound}) and (\ref{lower}), we obtain
\begin{align*}
  \iint_K |P^{(\alpha,\beta)}_n(x)|^p \,dxdy \geq  \frac{ \Upsilon n^{2p}}{ M \epsilon_n^p } \iint_K |yP^{(\alpha,\beta)}_n(x)|^p \,dxdy.
\end{align*}
\end{proof}

As an immediate consequence of Theorem \ref{mainKroo} and Theorem \ref{KrooThm}, we see the following:

\begin{thm}\label{mainKroo2}
  Let $K \subset \mathbb{R}^2$ be a piecewise graph domain. Suppose that one of the strongest cuspidal point of $K$  is regular.  If $\epsilon_n$ is a solution of the equation $2n^2 \omega_K\left( \frac{\epsilon_n}{n^2} \right)=1$, $n \in \mathbb{N}$, then there exists a positive constant $B$, depending on $K$ and on $p$, such that for $Q \in \mathcal{P}_n(\mathbb{R}^d)$, $n \in \mathbb{N}$,
  \begin{align}\label{mainKrooasymp}
    \| \nabla Q \|_{L^p(K)} \leq B \frac{n^2}{\epsilon_n} \| Q \|_{L^p(K)}.
  \end{align}
Moreover, the inequality (\ref{mainKrooasymp})  is asymptotically best possible.
\end{thm}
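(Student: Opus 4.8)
The plan is to read off both halves of the statement from results already established, so that essentially no new computation is required; this is exactly why the surrounding text announces the conclusion as an immediate consequence of Theorems~\ref{KrooThm} and~\ref{mainKroo}. First I would observe that the hypotheses already place us in the setting of Theorem~\ref{KrooThm}. The assumption that $K$ admits a strongest cuspidal point means in particular that $K$ is a cuspidal piecewise graph domain, and the extra regularity imposed on that point constrains only the local profile $f$ of the boundary near the cusp; it does not disturb the covering of $K$ by finitely many cylinders with respect to each of which $K$ is a graph domain. Consequently Theorem~\ref{KrooThm} applies directly and yields the upper estimate (\ref{mainKrooasymp}), with a constant $B$ depending only on $K$ and $p$, for every $Q \in \mathcal{P}_n(\mathbb{R}^2)$. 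Note that this half does not even use the regularity hypothesis; only the cuspidal piecewise graph structure is needed.

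For the assertion that (\ref{mainKrooasymp}) is asymptotically best possible I would appeal to Theorem~\ref{mainKroo}. Under precisely the same hypotheses --- a regular strongest cuspidal point, and the same defining equation $2n^2\omega_K(\epsilon_n/n^2)=1$ for $\epsilon_n$ --- that theorem produces a constant $\Psi>0$ and a sequence $P_n \in \mathcal{P}_n(\mathbb{R}^2)$ with
\begin{align*}
  \|\nabla P_n\|_{L^p(K)} \geq \Psi \frac{n^2}{\epsilon_n}\, \|P_n\|_{L^p(K)}.
\end{align*}
Writing $M_n$ for the best constant in the comparison $\|\nabla Q\|_{L^p(K)} \leq M_n \|Q\|_{L^p(K)}$ over $Q \in \mathcal{P}_n(\mathbb{R}^2)$, the upper bound gives $M_n \leq B\,n^2/\epsilon_n$, while the test polynomials $P_n$ force $M_n \geq \Psi\,n^2/\epsilon_n$. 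Hence $M_n \sim n^2/\epsilon_n$ in the sense introduced in the Introduction, which is exactly the statement that the factor $n^2/\epsilon_n$ cannot be replaced by any slower-growing quantity: the inequality is asymptotically best possible.

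Since both the upper bound and the matching lower bound are quoted directly from Theorems~\ref{KrooThm} and~\ref{mainKroo}, the entire content of this theorem resides in those two results, and the present argument is a one-line synthesis of them. The only point demanding any care is the compatibility check underlying the first paragraph: one must confirm that the class of domains admitting a regular strongest cuspidal point is contained in the class of cuspidal piecewise graph domains covered by Theorem~\ref{KrooThm}, and that the quantity $\epsilon_n$, defined by the identical equation in both theorems, is literally the same object in the two estimates. Once this is verified, the conclusion follows immediately, and I do not expect any genuine obstacle beyond this bookkeeping.
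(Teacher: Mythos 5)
Your proposal is correct and follows exactly the paper's route: the paper offers no separate proof of Theorem~\ref{mainKroo2}, presenting it precisely as the immediate combination of the upper bound from Theorem~\ref{KrooThm} and the matching lower bound from Theorem~\ref{mainKroo}. Your additional bookkeeping (that a regular strongest cuspidal point places $K$ in the class of cuspidal piecewise graph domains, and that $\epsilon_n$ is the same quantity in both theorems) is the same implicit check the paper relies on.
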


\section{A growth rate}
Now a similar proof to that of the Theorem \ref{mainKroo} gives the following lemma:
\begin{lem}\label{mainlem}
  Let $\alpha, \beta,  p$ be positive real numbers and $0<\upsilon \leq 1$. Let $f$ be a bounded real-valued function defined on the interval $[0,1]$. Suppose that $f(1)=0$, $f\left( 1- \frac{\upsilon}{n^2}\right)=\frac{\epsilon_n}{n^2}$ and there exists $0<\eta<1$ such that $f|_{[\eta,1]} $ is convex with the property that $I_{conv}(f|_{[\eta,1]}) < \infty$. If $\alpha p \geq 2  I_{conv}(f|_{[\eta,1]}) +2 - p/2$, then
\end{lem}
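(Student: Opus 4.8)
The plan is to run the argument of Theorem \ref{mainKroo} essentially verbatim, the one change being that the weight $(f(x))^{p+1}$ produced there by integrating $|y|^p$ over $[-f(x),f(x)]$ is here replaced by the first power $f(x)$; this is exactly why the hypothesis loses the factor $(p+1)$ and reads $\alpha p \geq 2 I_{conv}(f|_{[\eta,1]}) + 2 - p/2$. The natural target is a growth-rate estimate of the form
$$\int_0^1 f(x)\,|P_n^{(\alpha,\beta)}(x)|^p\,dx \;\sim\; \epsilon_n\,n^{\alpha p - 4},$$
so I would prove a matching pair of bounds of order $\epsilon_n n^{\alpha p-4}$. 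First I set $x_n := 1 - \upsilon/n^2$, so that $f(x_n) = \epsilon_n/n^2$ by hypothesis, put $u_n := \arccos x_n \sim 1/n$, and split $\int_0^1 = \int_0^\eta + \int_\eta^{x_n} + \int_{x_n}^1$, handling the three pieces with the same devices used in (\ref{last})--(\ref{left4}). Note that since $(f)^{1/r}$ must be defined for $I_{conv}$ to make sense, $f \geq 0$ on $[\eta,1]$, and a nonnegative convex $f$ with $f(1)=0$ is nonincreasing there.

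For the piece over $[x_n,1]$ I would use monotonicity to bound $f(x) \leq f(x_n) = \epsilon_n/n^2$, then pass to $x = \cos\theta$ and invoke the endpoint Jacobi bound of (\ref{l2}) over $[0,u_n]$, giving $\int_{x_n}^1 f|P_n^{(\alpha,\beta)}|^p \leq C(\epsilon_n/n^2)\,n^{\alpha p - 2} = C\epsilon_n n^{\alpha p - 4}$. For the middle piece over $[\eta,x_n]$ I would apply the oscillatory-range bound $|P_n^{(\alpha,\beta)}(\cos\theta)|^p \leq \Lambda n^{-p/2}\theta^{-\alpha p - p/2}$, change variables, use $\sin\theta \leq \theta$, and integrate by parts as in (\ref{IbyP}). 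The convexity inequality (\ref{appindex}), namely $-f'(x)(1-x) \leq (I_{conv}(f)+\delta) f(x)$, together with $\sqrt{1-x^2}\arccos x \leq 2(1-x)$ as in (\ref{derE}), absorbs the derivative term provided $\alpha p + p/2 - 2 > 2(I_{conv}(f)+\delta)$, i.e. provided $\alpha p > 2 I_{conv}(f|_{[\eta,1]}) + 2 - p/2$; evaluating the surviving boundary term at $u_n \sim 1/n$, where $f(\cos u_n) = \epsilon_n/n^2$, again produces the order $\epsilon_n n^{\alpha p - 4}$.

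The piece over $[0,\eta]$ is where the hypothesis is sharp. There $\theta$ stays in $[\sigma,\pi/2]$, bounded away from $0$, so the Jacobi bound gives only $\int_0^\eta f|P_n^{(\alpha,\beta)}|^p \leq C(\sup|f|)\,n^{-p/2}$; to see this is dominated by $\epsilon_n n^{\alpha p - 4}$ I would invoke the lower bound on $\epsilon_n$ from the index of convexity as in (\ref{lowEn}). Comparing $f$ with $h_\delta(x) = (1-x)^{I_{conv}(f)+\delta}\,w/(1-\eta)^{I_{conv}(f)+\delta}$ yields $\epsilon_n \geq c\,n^{2 - 2I_{conv}(f) - 2\delta}$, whence $n^{-p/2} \leq C\epsilon_n n^{\alpha p - 4}$ precisely when $\alpha p \geq 2 I_{conv}(f|_{[\eta,1]}) + 2 - p/2$ after letting $\delta \to 0$. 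For the matching lower bound I would follow (\ref{lbound})--(\ref{lbound3}): restrict to a neighbourhood of $x=1$, substitute $x = 1 - z^2/(cn^2)$, apply the Mehler--Heine formula to bound $|P_n^{(\alpha,\beta)}|$ below, and use (\ref{appindex}) once more after integration by parts to obtain $\int_0^1 z f(g_n(z))\,dz \geq c\,\epsilon_n/n^2$, hence $\geq \Upsilon \epsilon_n n^{\alpha p - 4}$.

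The main obstacle is the middle estimate: forcing the integration by parts in the oscillatory range to close off requires simultaneously the correct decay $\theta^{-\alpha p - p/2}$ of $|P_n^{(\alpha,\beta)}(\cos\theta)|^p$, the convexity inequality (\ref{appindex}) controlling $f'$ by $f$, and the threshold on $\alpha p$ ensuring both that the resulting power of $\theta$ is integrable and that the $[0,\eta]$ remainder is negligible. These three requirements meet exactly at $\alpha p = 2 I_{conv}(f|_{[\eta,1]}) + 2 - p/2$, which is why the hypothesis takes precisely this form; everything else reduces to the endpoint and Mehler--Heine computations already carried out in Theorem \ref{mainKroo}.
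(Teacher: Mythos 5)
Your proposal is correct and is essentially the paper's own proof: the paper gives no separate argument for Lemma \ref{mainlem} beyond the remark that ``a similar proof to that of Theorem \ref{mainKroo} gives the lemma,'' and your reconstruction carries out exactly that adaptation --- the same three-way split $\int_0^\eta+\int_\eta^{x_n}+\int_{x_n}^1$, the Szeg\H{o} endpoint and oscillatory bounds, the integration by parts absorbed via (\ref{derE})--(\ref{appindex}), the $h_\delta$ comparison for the $[0,\eta]$ piece, and the Mehler--Heine lower bound, with the weight $f$ in place of $f^{p+1}$ correctly explaining the disappearance of the factor $(p+1)$ from the threshold on $\alpha p$. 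The only caveat (inherited from the paper itself, since Theorem \ref{mainKroo}'s argument uses strict inequalities with $\delta>0$) is that both your argument and the paper's really require $\alpha p > 2I_{conv}(f|_{[\eta,1]})+2-p/2$ rather than the stated $\geq$ at the boundary case.
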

\begin{align}\label{IiJP}
  \int_{0}^{1} |f(x)| \left|P^{(\alpha,\beta)}_n(x) \right|^p \, dx \sim  \epsilon_n n^{\alpha p -  4}.
\end{align}
It is worth noting that the above lemma provides a refinement and generalization of Theorem 7.34. from \cite{Sz}. \newline \indent
We shall show that, with suitable hypotheses, there is a sort of converse to Theorem \ref{mainKroo}.

\begin{thm}\label{en}
  Let $\{\epsilon_n\}$ be a sequence of real numbers such that $0< \epsilon_{n+1} \leq \epsilon_n$, $\lim_{n \rightarrow \infty} \epsilon_n=0$ and there exist constants $C_n$  with the property that (for all $n$ and $m$)
\begin{align}
 & \frac{\epsilon_n}{n^2} - \frac{\epsilon_m}{m^2} \geq -C_m \frac{\epsilon_m}{2} (1/m^2 - 1/n^2 ), \label{asumpC} \\ &\frac{\epsilon_n}{n^2} - \frac{\epsilon_m}{m^2}= -C_m \frac{ \epsilon_m }{2} (1/m^2 - 1/n^2 ) \Rightarrow C_n \epsilon_n = C_m \epsilon_m, \nonumber \\
 & \sup \{C_n : n \in \mathbb{N}\} < \infty. \label{BofCn}
\end{align}
Then there exist a compact set $D \subset \mathbb{R}^2$, a constant $M>0$ and a sequence of polynomials $P_n \in \mathcal{P}_n(\mathbb{R}^2)$ such that
\begin{align*}
  \frac{ \| \nabla P_n \|_{L^p(D)} } { \| P_n \|_{L^p(D)} }  \sim \frac{n^2}{\epsilon_n} \quad \text{and} \quad \| \nabla Q \|_{L^p(D)} \leq M \frac{n^2}{\epsilon_n}  \| Q \|_{L^p(D)}
\end{align*}
for any $Q \in \mathcal{P}_n(\mathbb{R}^2)$.
\end{thm}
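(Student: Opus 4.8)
The plan is to realize the prescribed growth rate by a symmetric cuspidal domain whose upper boundary is the convex interpolant of the data points $\bigl(1-\tfrac{1}{2n^2},\,\tfrac{\epsilon_n}{n^2}\bigr)$, to read off the lower bound from Lemma \ref{mainlem} applied to this boundary function and its powers, and to obtain the matching upper bound from Kro\'o's Theorem \ref{KrooThm}.

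First I would build the boundary profile. Set $x_n:=1-\tfrac{1}{2n^2}$, $y_n:=\tfrac{\epsilon_n}{n^2}$ and $s_n:=-C_n\epsilon_n$, so that $x_n-x_m=\tfrac12(1/m^2-1/n^2)$. Hypothesis (\ref{asumpC}) then reads exactly $y_n\ge y_m+s_m(x_n-x_m)$ for all $n,m$; that is, the line through $(x_m,y_m)$ of slope $s_m$ supports the data from below. Letting $n\to\infty$ in (\ref{asumpC}) forces $C_m\ge 2$, so each supporting line takes the value $\tfrac{\epsilon_m}{m^2}(1-\tfrac{C_m}{2})\le 0$ at $x=1$. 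Consequently
$$f(x):=\sup_{m}\,[\,y_m+s_m(x-x_m)\,]$$
is convex on $[\eta,1]$ for a suitable $0<\eta<1$, interpolates the data ($f(x_n)=y_n$, the interpolation being the saturation of (\ref{asumpC}), whose attainment is governed by the second hypothesis, which guarantees that $s_n$ is a genuine subgradient at $x_n$), and satisfies $f(1)=0$. At a sample point the construction gives $-f'(x_n)(1-x_n)=\tfrac{C_n}{2}f(x_n)$, and by convexity this propagates to $-f'(x)(1-x)\le \tfrac12(\sup_n C_n)\,f(x)$ on $[\eta,1]$; comparing with the characterization behind (\ref{appindex}), the uniform bound (\ref{BofCn}) yields $I_{conv}(f)\le \tfrac12\sup_n C_n<\infty$.

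Next I would put $D:=\overline{\{(x,y):\eta\le x\le 1,\ |y|\le f(x)\}}\cup R$, where $R$ is a fixed rectangle over $[0,\eta]$ chosen so that $D$ is a piecewise graph domain whose only, hence strongest, cuspidal point is $\mathbf{v}=(1,0)$; the symmetry about the $x$-axis together with the convexity and finite index of $f$ make $\mathbf{v}$ a regular strongest cuspidal point, and a direct computation of the modulus of continuity of the graphs $\pm f$ gives $\omega_D(\epsilon_n/n^2)\sim n^{-2}$, so the Kro\'o exponent $\tilde\epsilon_n$ solving $2n^2\omega_D(\tilde\epsilon_n/n^2)=1$ satisfies $\tilde\epsilon_n\sim\epsilon_n$. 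Now fix $\beta>0$ and $\alpha>-1$ with $\alpha p\ge 2(p+1)I_{conv}(f)+2-p/2$, and set $P_n(x,y):=y\,P^{(\alpha,\beta)}_n(x)$. Since $\partial_y P_n=P^{(\alpha,\beta)}_n(x)$, Lemma \ref{mainlem} applied to $2f$ gives $\|\partial_y P_n\|_{L^p(D)}^p=\int_0^1 2f\,|P^{(\alpha,\beta)}_n|^p\,dx\sim\epsilon_n n^{\alpha p-4}$, while applying it to $f^{p+1}$ (again convex, with $I_{conv}(f^{p+1})=(p+1)I_{conv}(f)<\infty$ and value $(\epsilon_n/n^2)^{p+1}$ at $x_n$) gives $\|P_n\|_{L^p(D)}^p=\tfrac{2}{p+1}\int_0^1 f^{p+1}|P^{(\alpha,\beta)}_n|^p\,dx\sim\epsilon_n^{p+1}n^{\alpha p-2p-4}$. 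Using $(P^{(\alpha,\beta)}_n)'=\tfrac12(n+\alpha+\beta+1)P^{(\alpha+1,\beta+1)}_{n-1}$, the remaining term $\partial_x P_n=y\,(P^{(\alpha,\beta)}_n)'(x)$ is of the same type with one extra factor of $n$ and one extra power of $f$, hence $\|\partial_x P_n\|_{L^p(D)}^p\sim\epsilon_n^{p+1}n^{\alpha p-4}=o(\epsilon_n n^{\alpha p-4})$ and is negligible. Therefore $\|\nabla P_n\|_{L^p(D)}\sim\|\partial_y P_n\|_{L^p(D)}$ and $\|\nabla P_n\|_{L^p(D)}/\|P_n\|_{L^p(D)}\sim n^2/\epsilon_n$. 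Finally, $D$ being a cuspidal piecewise graph domain with a regular strongest cuspidal point, Theorem \ref{mainKroo2} (equivalently Theorem \ref{KrooThm} combined with $\tilde\epsilon_n\sim\epsilon_n$) gives $\|\nabla Q\|_{L^p(D)}\le M\,\tfrac{n^2}{\epsilon_n}\|Q\|_{L^p(D)}$ for every $Q\in\mathcal{P}_n(\mathbb{R}^2)$.

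The main obstacle is the first step: extracting from the three purely numerical hypotheses on $\{\epsilon_n\}$ a genuine convex boundary function with $f(x_n)=y_n$ and $I_{conv}(f)<\infty$. The delicate points are checking that the supporting lines of slopes $s_m$ are mutually consistent, so that $f$ truly interpolates and $s_n$ is its subgradient at $x_n$ (this is exactly where the equality hypothesis $C_n\epsilon_n=C_m\epsilon_m$ is used), and that the uniform bound (\ref{BofCn}) translates both into a finite index of convexity and into the modulus estimate $\tilde\epsilon_n\sim\epsilon_n$ needed to align Kro\'o's upper bound with the constructed lower bound.
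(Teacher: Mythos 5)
Your construction is genuinely different from the paper's, and in its main thrust it is viable. The paper does not build the boundary as a supremum of supporting lines: it feeds the data $f(1-\tfrac{1}{2n^2})=\epsilon_n/n^2$, $G(1-\tfrac{1}{2n^2})=-C_n\epsilon_n$ into the Azagra--Mudarra Whitney extension theorem for $C^1$ convex functions (this is exactly why hypotheses (\ref{asumpC})--(\ref{BofCn}) have the form they do: they are the compatibility conditions of that theorem), takes the one-sided domain $0\le y\le \tilde f(x)$, and then reruns the Jacobi-polynomial estimates of Theorem \ref{mainKroo} with the derivative inequality (\ref{new}) in place of (\ref{appindex}); the comparability of the Kro\'o scale with $\epsilon_n$ is obtained by the rational-approximation argument (\ref{nm})--(\ref{appC2}) with $m\approx n/\eta$. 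Your minimal convex interpolant $f=\sup_m[\,y_m+s_m(\cdot-x_m)\,]$ is more elementary: the interpolation property, $f(1)=0$ and $C_m\ge 2$ are verified exactly as you say, $s_n\in\partial f(x_n)$ is automatic from the sup construction (it is \emph{not} what the second hypothesis is for --- that hypothesis is the $C^1$ compatibility condition in Azagra--Mudarra, and your route in fact never uses it), and the built-in bound $f\ge \ell_n$ with $2\le C_n\le C$ even permits a shorter proof of the scale comparison than the paper's: one can check directly that $\omega_D(c\epsilon_n/n^2)\le \tfrac{1}{2n^2}$ for a fixed $c\in(0,1)$ and large $n$, since $1-f^{-1}(c\epsilon_n/n^2)\le \tfrac{1}{n^2}\bigl(\tfrac12-\tfrac{1-c}{C_n}\bigr)$.

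That said, two steps are asserted where the real content lies. First, ``$\omega_D(\epsilon_n/n^2)\sim n^{-2}$, so $\tilde\epsilon_n\sim\epsilon_n$'' is a non sequitur: the solution of $2n^2\omega_D(\tilde\epsilon_n/n^2)=1$ is not determined by the values of $\omega_D$ at the sample scales; one needs a lower bound on $f$ (equivalently an upper bound on $1-f^{-1}$) \emph{below} the scale $\epsilon_n/n^2$, and this is precisely where (\ref{asumpC}) and (\ref{BofCn}) must be invoked --- either via the paper's argument or via the supporting-line computation sketched above, which you gesture at only in your closing paragraph. Second, your index-of-convexity bookkeeping runs backwards. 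A derivative bound $-f'(x)(1-x)\le \tfrac{C}{2}f(x)$ does not imply $I_{conv}(f)\le C/2$: your $f$ is piecewise affine near the cusp, so $f^{1/r}$ fails to be convex for every $r>1$ and $I_{conv}(f)=1$, while inequality (\ref{appindex}) with constant $1+\delta$ fails at the knots whenever $C_n>2(1+\delta)$. Consequently Lemma \ref{mainlem} cannot be applied literally through its $I_{conv}$ hypothesis; what its proof (like that of Theorem \ref{mainKroo}) actually consumes is the derivative inequality itself, with $\alpha$ chosen large relative to its constant --- exactly the role played by (\ref{new}) in the paper --- so you must rerun that argument rather than cite the lemma. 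Finally, propagating the derivative inequality from the knots to all of $[\eta,1]$ needs more than ``convexity'': between $x_n$ and $x_{n+1}$ one gets $-f'(x)(1-x)\le\tfrac{C_n}{2}f(x_n)$, and replacing $f(x_n)$ by $f(x)$ requires $\epsilon_n/\epsilon_{n+1}$ to be bounded, which again follows from (\ref{asumpC}) and (\ref{BofCn}), not from convexity alone. All of these gaps are fixable within your framework, but as written they are gaps.
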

\begin{proof}
  Let $W:=\{1-\frac{1}{2n^2} : n \in \mathbb{N}\} \cup \{1\}$. Define $f(1-\frac{1}{2n^2}):=\frac{ \epsilon_n }{ n^2 }$, $f(1):=0$, $G(1-\frac{1}{2n^2}):= -C_n \epsilon_n $, $G(1):=0$. Using Theorem 1.10 from \cite{AM}, there exists a continuously differentiable function $F: \mathbb{R} \rightarrow \mathbb{R}$ such that $F$ is convex and $F=f$, $F'=G$ on $W$. Now define
\begin{align}\label{function}
  \tilde{f}(x):= \begin{cases}
\epsilon_1 \quad  &\text{for} \quad x \in [0,1/2]\\
F(x) \quad &\text{for} \quad x \in (1/2,1].
\end{cases}
\end{align}
Since $F$ is convex and $F(1)=0$ it follows that $\tilde{f}$ is strictly decreasing on the interval $[1/2,1]$. We shall show that if $\tilde{f}(y)=\tilde{f}(x_1) - \tilde{f}(x_2)$, then
   \begin{align}\label{step}
     \frac{\tilde{f}(y)}{1-y} \leq \frac{ \tilde{f}(x_1) - \tilde{f}(x_2) }{x_2 -x_1}
   \end{align}
for any $1/2 \leq x_1 < x_2\leq 1$ and $1/2 \leq y <1$. Since $\tilde{f}(y)=\tilde{f}(x_1) - \tilde{f}(x_2)$, we have $y \geq x_1$. If $x_1 < x_2 \leq y$, then, by the mean value theorem, there exist $\xi \in (x_1,x_2)$ and $\eta \in (y,1)$ such that
   \begin{align*}
     \frac{-\tilde{f}(y)}{1-y}=\tilde{f}'(\eta), \quad  \frac{ \tilde{f}(x_2) - \tilde{f}(x_1) }{x_2 -x_1}=\tilde{f}'(\xi).
   \end{align*}
Hence, (using the fact that differentiable function of one variable is convex on an interval if and only if its derivative is monotonically non-decreasing on that interval)
  \begin{align*}
     \frac{\tilde{f}(y)}{1-y}=-\tilde{f}'(\eta) \leq  -\tilde{f}'(\xi)= \frac{ \tilde{f}(x_1) - \tilde{f}(x_2) }{x_2 -x_1}.
   \end{align*}
For the case $x_1 \leq y < x_2$, let
\begin{align*}
  S(t,r):=\frac{ F(t) - F(r) }{t -r}.
\end{align*}
It is known that $F$ is convex if and only if $S(t,r)$ is monotonically non-decreasing in $t$, for every fixed $r$. Therefore
  \begin{align*}
     \frac{ F(x_1) - F(x_2) }{x_1 -x_2}=S(x_2,x_1) \leq S(1,x_1)&=\frac{ F(x_1) - F(1) }{x_1 - 1}=S(x_1,1) \\ &\leq  S(y,1)=\frac{ F(y) - F(1) }{y - 1}.
  \end{align*}
Since $F=\tilde{f}$ on the interval $[1/2,1]$,  the inequality (\ref{step}) holds when $x_1 \leq y < x_2$. \newline \indent If we define
   \begin{align}\label{domainD}
     D:=\{(x,y) \in \mathbb{R}^2 : 0 \leq x \leq 1, \, 0 \leq y \leq \tilde{f}(x)\},
   \end{align}
then $D$ is a graph domain with respect to the cylinder $L_{\mathbf{a}}(\epsilon_1/2,\mathbf{u})$, where $\mathbf{a}=(0,\epsilon_1/2)$ and $\mathbf{u}=(0,1)$. From the inequality (\ref{step}) it follows that
   \begin{align}\label{omegaD}
     \omega_D(t)=\sqrt{ \left( 1 - F^{-1}(t)\right)^2 + t^2 }
   \end{align}
whenever  $t \leq \epsilon_1$. Hence
   \begin{align}\label{omegaDn}
     \omega_D\left( \frac{\epsilon_n}{n^2}\right)=\sqrt{  \frac{ 1 }{ 4n^4 } + \left( \frac{ \epsilon_n }{ n^2 } \right)^2 }.
   \end{align}
Let $C:=\sup \{C_n : n \in \mathbb{N}\}$. Select $i, \tau \in \mathbb{N}$ so that $\eta:=\frac{\tau}{i} < 1$ and $1 - \frac{C}{2} + \frac{C}{2} \eta^2 >0$. Now we shall show that if $n \in \mathbb{N}$ is large enough, then there exists $m \in \mathbb{N}$  such that
\begin{align}\label{nm}
  m>n \quad  \text{and} \quad  \omega_D\left( \frac{\epsilon_m}{m^2}\right) < \frac{1}{2n^2}.
\end{align}
For each $n \in \mathbb{N}$ let $l_n \in \mathbb{N}_0$ be such that $n=l_n \tau + s_n$ for some $s_n \in \{0,1,2, \ldots, \tau-1\}$. Define $m_n:=l_n i + n - l_n \tau$. For simplicity of notation, we write $m$ instead of $m_n$. It is clear that
\begin{align}\label{lim}
\lim_{n \rightarrow \infty} \frac{m}{n}=\frac{1}{\eta} >1.
\end{align}
Now choose $n_0$ so large that,
\begin{align*}
  4\epsilon^2_m < \frac{m^4}{n^4} -1
\end{align*}
whenever $n \geq n_0$. Hence $\omega_D\left( \frac{\epsilon_m}{m^2}\right) < \frac{1}{2n^2}$. Thus if $\omega_D\left( \frac{s_n}{n^2}\right)=\frac{1}{2n^2}$, then
\begin{align}\label{sn}
  \frac{\epsilon_m}{m^2} < \frac{s_n}{n^2} < \frac{\epsilon_n}{n^2}.
\end{align}
On the other hand, we may use the inequality (\ref{asumpC}) to write
\begin{align}\label{appC1}
  \frac{\epsilon_n}{n^2}(1 - \frac{C_n}{2} + \frac{C_n}{2} \frac{n^2}{m^2}) \leq \frac{\epsilon_m}{m^2}.
\end{align}
Take $\delta>0$ so that $\xi:=1 - \frac{C}{2} + \frac{C}{2} (\eta^2-\delta) >0$, then there exists $n_1 \in \mathbb{N}$ such that
\begin{align}\label{appC2}
  \frac{\epsilon_n}{n^2} \xi   \leq  \frac{\epsilon_n}{n^2}(1 - \frac{C_n}{2} + \frac{C_n}{2} (\eta^2-\delta) ) \leq \frac{\epsilon_m}{m^2}
\end{align}
whenever $n \geq n_1$.
By (\ref{sn}), (\ref{appC2}) and Theorem \ref{KrooThm} there exists a constant $B>0$ such that
  \begin{align}\label{corKroo}
    \| \nabla Q \|_{L^p(D)} \leq B \frac{n^2}{\epsilon_n}  \| Q \|_{L^p(D)}
  \end{align}
for any $Q \in \mathcal{P}_n(\mathbb{R}^2)$. Now let $\nu:=\max \{n_0, n_1, r\}$. If $x \in [1- \frac{1}{2\nu^2},1)$, then there exists $\varsigma \in \mathbb{N}$ such that
\begin{align*}
  x \in [1-\frac{1}{2\varsigma^2}, 1-\frac{1}{2(\varsigma + 1)^2}] \subset [1-\frac{1}{2\varsigma^2}, 1-\frac{1}{2m_{\varsigma}^2}].
\end{align*}
Hence, by properties of $F$,
\begin{align}\label{new}
   \frac{C}{2 \xi} F(x) \geq \frac{C}{2 \xi} F(1-\frac{1}{2m_\varsigma^2})=&\frac{C \epsilon_{m_\varsigma} }{2 \xi m_\varsigma^2} \geq \frac{C \epsilon_\varsigma}{2 \varsigma^2} \nonumber \\ &\geq \frac{C_\varsigma \epsilon_\varsigma}{2 \varsigma^2} = -F'(1-\frac{1}{2\varsigma^2}) \frac{1}{2\varsigma^2}\geq -F'(x)(1-x).
\end{align}
  Now, if $P_n(x,y):=yP^{(\alpha,\beta)}_n(x)$, then
\begin{align}\label{roz}
  \left\| P_n \right\|^p_{L^p(D)}=& \int_{0}^{1- \frac{1}{2\nu^2}} \int_{0}^{\tilde{f}(x)} |y P^{(\alpha,\beta)}_n(x)|^p \,dydx \nonumber \\ &+ \int_{ 1- \frac{1}{2\nu^2} }^{1- \frac{1}{2n^2}}  \int_{0}^{\tilde{f}(x)} |y P^{(\alpha,\beta)}_n(x)|^p \,dydx \nonumber \\ &+ \int_{1- \frac{1}{2n^2}}^{1}  \int_{0}^{\tilde{f}(x)} |y P^{(\alpha,\beta)}_n(x)|^p \,dydx
\end{align}
  for $n>\nu$. It is easy to conclude that
\begin{align}\label{war1}
  \int_{1- \frac{1}{2n^2}}^{1}  \int_{0}^{\tilde{f}(x)} |y P^{(\alpha,\beta)}_n(x)|^p \,dydx \leq  \frac{2}{p+1} \left(\frac{\epsilon_n}{n^2} \right)^{p+1}  \int_{1- \frac{1}{2n^2}}^{1}  |P^{\alpha}_n(x)|^p \,dx
\end{align}
An argument similar to the one we gave for $\int_{x_n}^{1}  |P^{(\alpha\, \beta)}_n(x)|^p \,dx$ shows that there exists $\vartheta>0$ such that
\begin{align}\label{war2}
  \int_{1- \frac{1}{2n^2}}^{1}  |P^{(\alpha,\beta)}_n(x)|^p \,dx \leq \vartheta n^{\alpha p -2}
\end{align}
for all sufficiently large $n$. Thus by (\ref{war1}) and (\ref{war2}),
\begin{align}\label{war3}
  \int_{1- \frac{1}{2n^2}}^{1}  \int_{0}^{\tilde{f}(x)} |y P^{(\alpha,\beta)}_n(x)|^p \,dydx \leq \frac{2\vartheta}{p+1}  \epsilon_n^{p+1} n^{\alpha p -2p-4}.
\end{align}
Using the methods similar to ones used in the proof of Theorem \ref{mainKroo}, applying (\ref{new}) instead of (\ref{appindex}), we have
\begin{align}
  &\upsilon \epsilon_n  n^{\alpha p-4} \leq \int_{1- \frac{1}{2n^2}}^{1}  \int_{0}^{\tilde{f}(x)} |P^{(\alpha\, \beta)}_n(x)|^p \,dxdy , \label{lowernew1} \\
  &\int_{ 1- \frac{1}{2\nu^2} }^{1- \frac{1}{2n^2}}  \int_{0}^{\tilde{f}(x)} |y P^{(\alpha,\beta)}_n(x)|^p \,dydx \leq \vartheta_1 \epsilon_n^{p+1} n^{\alpha p -2p-4} \label{lowernew2}
\end{align}
for $\alpha p + p/2 -2 > (p+1)\frac{C}{ \xi}$,  appropriately adjusted constants $\upsilon, \vartheta_1$ and all sufficiently large $n$. \newline \indent Now we shall show that there exist $\vartheta_2>0$, $n_2,k \in \mathbb{N}$ such that
\begin{align}\label{cha0}
 \vartheta_2 n^{-p/2} \leq \int_{ 1- \frac{1}{2\nu^2} }^{1- \frac{1}{2k^2}}  \int_{0}^{\tilde{f}(x)} |y P^{(\alpha,\beta)}_n(x)|^p \,dydx
\end{align}
for all $n \geq n_2$. By properties of $f$, we may write
\begin{align}\label{cha}
  \frac{1}{p+1} (\tilde{f}(1- \frac{1}{2\nu^2}))^{p+1} \int_{ 1- \frac{1}{2\nu^2} }^{1- \frac{1}{2k^2}} |P^{(\alpha,\beta)}_n(x)|^p \,dx   \leq \int_{ 1- \frac{1}{2\nu^2} }^{1- \frac{1}{2k^2}}  \int_{0}^{\tilde{f}(x)} |y P^{(\alpha,\beta)}_n(x)|^p \,dx.
\end{align}
The change of variable $x=\cos \theta$, give us
\begin{align}\label{cha1}
  \int_{ 1- \frac{1}{2\nu^2} }^{1- \frac{1}{2k^2}} |P^{(\alpha,\beta)}_n(x)|^p \,dx = \int_{u_k}^{u_{\nu}} |P^{(\alpha,\beta)}_n(\cos \theta)|^p \sin \theta \,d \theta,
\end{align}
where $u_k=\arccos (1- \frac{1}{2k^2})$ and $u_{\nu}=\arccos (1- \frac{1}{2\nu^2})$. From Theorem 8.21.8 in \cite{Sz}
\begin{align}\label{cha2}
  &P^{(\alpha,\beta)}_n(\cos \theta)=n^{-1/2} k(\theta) \cos(N \theta + \gamma) + O(n^{-3/2}), \\
  &k(\theta)= \pi^{-1/2} (\sin (\theta/2))^{-\alpha -1/2} (\cos (\theta/2))^{-\beta -1/2}, \quad N=n+(\alpha + \beta +1)/2,  \nonumber \\
  &\gamma=-(\alpha + 1/2) \pi /2, \quad 0< \theta < \pi. \nonumber
\end{align}
Moreover, if $\delta >0$, then the bound for the error term holds uniformly in the interval $[\delta, \pi - \delta]$. Let $0 < \theta_1 < \theta_2 < \ldots < \theta_n < \pi$ be the zeros of $P^{(\alpha,\beta)}_n(\cos \theta)$. Then, by (8.9.8) of \cite{Sz}, the zeros $\theta_l$ from a fixed interval $[a, b]$ in the interior of $[0, \pi]$ can be written in the following form
\begin{align}\label{cha3}
  \theta_l= N^{-1} ((l-1/2)\pi - \gamma + K \pi + \xi_n),
\end{align}
where $K$ is a fixed integer (depending only on $\alpha, \beta, a, b$) and $\xi_n \rightarrow 0$. If $k > \nu$, $0<2\rho < \frac{1}{2\nu^2} - \frac{1}{2k^2}$, then
\begin{align}\label{cha4}
  0 < \int_{U} \,dx,
\end{align}
where $$U:= [1-\frac{1}{2\nu^2}, 1- \frac{1}{2k^2} ] \setminus  \bigcup_{l=1}^n \left( \frac{ (l-1/2)\pi - \gamma + K \pi - \rho}{N}, \frac{ (l-1/2)\pi - \gamma + K \pi + \rho} {N} \right).$$
By (\ref{cha1}), (\ref{cha2}), (\ref{cha3}) and (\ref{cha4}), we have
\begin{align}\label{cha5}
  \int_{ 1- \frac{1}{2\nu^2} }^{1- \frac{1}{2k^2}} |P^{(\alpha,\beta)}_n(x)|^p \,dx \sim n^{-p/2}.
\end{align}
Combining (\ref{cha}) and (\ref{cha5}) we obtain (\ref{cha0}).
\newline \indent By properties of $\tilde{f}$ it follows that
\begin{align}\label{ep}
   \int^{1- \frac{1}{2\nu^2}}_{0}  \int_{0}^{\tilde{f}(x)} |y P^{(\alpha,\beta)}_n(x)|^p \,dydx \leq& \frac{\epsilon_1^{p+1}}{p+1} \int^{1- \frac{1}{2\nu^2}}_{0}  |P^{(\alpha,\beta)}_n(x)|^p \,dx \nonumber \\ &= \frac{\epsilon_1^{p+1}}{p+1} \int_{u_\nu}^{\pi/2} |P^{(\alpha,\beta)}_n( \cos \theta)|^p \sin \theta \, d\theta.
\end{align}
Hence, by (\ref{cha2}),
\begin{align}\label{ep1}
   \int^{1- \frac{1}{2\nu^2}}_{0}  \int_{0}^{\tilde{f}(x)} |y P^{(\alpha,\beta)}_n(x)|^p \,dydx \leq  \vartheta_3 n^{-p/2} \int_{u_\nu}^{\pi/2}  \theta^{-\alpha p - p/2 +1} \,d\theta
\end{align}
for appropriately adjusted constant $\vartheta_3$. Together with (\ref{lowernew2}) and (\ref{cha0}), this last estimate implies that there exists a constant $\vartheta_4>0$ so that
\begin{align}\label{ep2}
  \int^{1- \frac{1}{2\nu^2}}_{0}  \int_{0}^{\tilde{f}(x)} |y P^{(\alpha,\beta)}_n(x)|^p \,dydx \leq  \vartheta_4 \epsilon_n^{p+1} n^{\alpha p -2p-4}
\end{align}
whenever $\alpha p + p/2 -2 > (p+1)\frac{C}{ \xi}$. Putting together (\ref{roz}), (\ref{war3}), (\ref{lowernew1}), (\ref{lowernew2}) and (\ref{ep2}), we find that
\begin{align}\label{ep3}
  \frac{ \left\| \frac{ \partial P_n}{\partial y}  \right\|_{L^p(D)} } { \| P_n \|_{L^p(D)} } \geq \vartheta_5 \frac{n^2}{\epsilon_n},
\end{align}
where $\vartheta_5 > 0$ is a constant independent of $n$.
Finally, (\ref{corKroo}) and (\ref{ep3}) yield that
   \begin{align*}
     \frac{ \| \nabla P_n \|_{L^p(D)} } { \| P_n \|_{L^p(D)} }  \sim \frac{n^2}{\epsilon_n}.
   \end{align*}
\end{proof}

\begin{lem}\label{cor}
  Define a function $\varphi_{\iota}$ on the interval $[0,1]$ as follows:
  \begin{align*}
    \varphi(t)= \begin{cases}
                          \frac{t}{1+ \ln(1/t) }, & \mbox{if } \, t \in (0,1] \\
                          0, & \mbox{for } \,  t=0.
                        \end{cases}
  \end{align*}
  Let $E_\iota=\{(x,y) \in \mathbb{R}^2 : 0 \leq x \leq 1,  0 \leq  y \leq \varphi ((1-x)^\iota) \}$. Then  there exist a positive constant $B_\iota$ and a sequence of polynomials $P_n$ such that
   \begin{align}\label{setE}
      &\frac{ \| \nabla P_n \|_{L^p(D)} } { \| P_n \|_{L^p(D)} }  \sim n^{2\iota}(1+\iota \ln(2n^2)) \quad \text{and} \\  \| \nabla &Q\|_{L^{p}(E_\iota)} \leq   B_\iota n^{2\iota}(1+\iota \ln(2n^2)) \left\| Q \right\|_{L^{p}(E_\iota)}
       \end{align}
for any $Q \in \mathcal{P}_n(\mathbb{R}^2)$.
\end{lem}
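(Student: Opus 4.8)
The plan is to exhibit $E_\iota$ as a regular cuspidal piecewise graph domain, compute the quantity $\epsilon_n$ attached to it explicitly, and then read off both conclusions from Theorems~\ref{mainKroo} and~\ref{mainKroo2}. Writing $s=1-x$, the single cusp of $E_\iota$ sits at $(1,0)$ and near it the upper boundary is the graph of
$$g(x):=\varphi\bigl((1-x)^\iota\bigr)=\frac{(1-x)^\iota}{1+\iota\ln\frac{1}{1-x}}=\frac{s^\iota}{1+\iota\ln(1/s)}.$$
Since $E_\iota=\{0\le x\le 1,\ 0\le y\le g(x)\}$ is a graph domain over $[0,1]$ and $g$ is continuous with $g(1)=0$, the only point where it can fail to be locally Lipschitz is $(1,0)$; this will be the strongest cuspidal point, and the first task is to check that it is regular in the sense of the definition preceding Theorem~\ref{mainKroo}.

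To verify regularity I would show that $g$ is convex on some $[\eta,1]$ and that $I_{conv}(g)<\infty$. Differentiating $g^{1/r}=s^{\iota/r}\,(1+\iota\ln(1/s))^{-1/r}$ in $s$ (equivalently in $x$, as $s=1-x$), the leading term of the second derivative near $s=0$ is $\tfrac{\iota}{r}\bigl(\tfrac{\iota}{r}-1\bigr)s^{\iota/r-2}(1+\iota\ln(1/s))^{-1/r}$, the contributions of the slowly varying logarithmic factor being of strictly lower order in $s$. Hence $g^{1/r}$ is convex near the cusp precisely when $\iota/r\ge 1$ and concave when $\iota/r<1$; in particular $g$ itself (the case $r=1$, $\iota\ge 1$) is convex on a left neighbourhood of $1$, and $I_{conv}(g)\le\iota<\infty$. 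After the normalizing shift, rotation and dilation used at the start of the proof of Theorem~\ref{mainKroo}, this places $E_\iota$ in the class to which Theorems~\ref{mainKroo} and~\ref{mainKroo2} apply, the finiteness $I_{conv}(g)<\infty$ being exactly what permits the choice of the parameter $\alpha$ there.

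Next I would pin down $\epsilon_n$. Because $g$ is convex and strictly decreasing near $1$, the same monotonicity argument that yields~(\ref{omegaD}) gives $\omega_{E_\iota}(t)=\sqrt{(1-g^{-1}(t))^2+t^2}$ for small $t$, so the defining equation $2n^2\omega_{E_\iota}(\epsilon_n/n^2)=1$ reads $(1-g^{-1}(\epsilon_n/n^2))^2+(\epsilon_n/n^2)^2=1/(4n^4)$. Since the eventual size of $\epsilon_n/n^2$ turns out to be of order $n^{-2\iota}/\ln n$, the term $(\epsilon_n/n^2)^2$ is negligible against $1/(4n^4)$, forcing $1-g^{-1}(\epsilon_n/n^2)\sim\tfrac{1}{2n^2}$ and hence
$$\frac{\epsilon_n}{n^2}\sim g\!\left(1-\frac{1}{2n^2}\right)=\varphi\!\left((2n^2)^{-\iota}\right)=\frac{(2n^2)^{-\iota}}{1+\iota\ln(2n^2)}.$$
Solving for $\epsilon_n$ and simplifying yields $\dfrac{n^2}{\epsilon_n}\sim n^{2\iota}\bigl(1+\iota\ln(2n^2)\bigr)$.

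Finally I would combine the two theorems. Theorem~\ref{mainKroo2} supplies a constant $B_\iota$ with $\|\nabla Q\|_{L^p(E_\iota)}\le B_\iota\,(n^2/\epsilon_n)\,\|Q\|_{L^p(E_\iota)}$ for every $Q\in\mathcal{P}_n(\mathbb{R}^2)$, which after substituting the asymptotics of $n^2/\epsilon_n$ is the second displayed inequality of the lemma. For the first, I would take the polynomials $P_n$ produced by Theorem~\ref{mainKroo}; they satisfy $\|\nabla P_n\|_{L^p(E_\iota)}\ge \Psi\,(n^2/\epsilon_n)\,\|P_n\|_{L^p(E_\iota)}$, while the upper bound applies to them as well, so their ratio is comparable to $n^2/\epsilon_n\sim n^{2\iota}(1+\iota\ln(2n^2))$, giving the claimed $\sim$. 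The main obstacle is the combined regularity-and-asymptotics step: one must control the logarithmic factor carefully enough to conclude both $I_{conv}(g)<\infty$ from the second-derivative sign of $g^{1/r}$ and the exact comparability $n^2/\epsilon_n\sim n^{2\iota}(1+\iota\ln(2n^2))$, including the justification that the $(\epsilon_n/n^2)^2$ contribution to $\omega_{E_\iota}$ genuinely drops out.
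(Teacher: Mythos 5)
Your proposal is correct in substance, but it organizes the argument differently from the paper. The paper omits the proof of Lemma \ref{cor} and points instead to the scheme of Theorem \ref{en}: derive the modulus-of-continuity formula for the domain as in (\ref{omegaD}), get the upper bound from Kro\'{o}'s Theorem \ref{KrooThm}, and re-run the Jacobi-polynomial estimates for the lower bound (with an inequality like (\ref{new}) playing the role of (\ref{appindex})). You instead invoke Theorems \ref{mainKroo} and \ref{mainKroo2} as black boxes, which requires exactly the two verifications you carry out: that $(1,0)$ is a regular strongest cuspidal point of $E_\iota$ (the substantive content being convexity of $g(x)=\varphi((1-x)^\iota)$ near $x=1$ together with $I_{conv}(g)\leq\iota<\infty$; the sandwich and distance conditions in the definition of regularity are automatic for a sub-graph domain of this form), and the asymptotics $n^2/\epsilon_n\sim(2n^2)^{\iota}(1+\iota\ln(2n^2))$, which is $\sim n^{2\iota}(1+\iota\ln(2n^2))$ in the paper's sense since the ratio tends to the finite constant $2^{\iota}$. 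This reduction is legitimate here precisely because $E_\iota$ has an explicit convex boundary function of finite convexity index, in contrast with the domain $D$ of Theorem \ref{en}, whose boundary function comes from a Whitney-type extension and need not satisfy that hypothesis (which is why the paper developed the substitute argument there). Your route buys economy, since no integral estimates are repeated; the paper's suggested route buys uniformity with the method that is genuinely needed for Theorem \ref{en}.

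Two points in your sketch should be tightened, though neither is fatal. First, the leading-term analysis does not decide the borderline case $r=\iota$, where the coefficient $\frac{\iota}{r}\bigl(\frac{\iota}{r}-1\bigr)$ vanishes: writing $L=1+\iota\ln(1/s)$, the second derivative of $s^{a}L^{-b}$ (with $a=\iota/r$, $b=1/r$) equals $s^{a-2}L^{-b}\bigl[a(a-1)+b\iota(2a-1)L^{-1}+b(b+1)\iota^2L^{-2}\bigr]$, which is nonnegative for all $a\geq 1$ and strictly negative near $s=0$ for $a<1$; hence in fact $I_{conv}(g)=\iota$, and for the application only finiteness matters. Second, combining the lower bound of Theorem \ref{mainKroo} with the upper bound of Theorem \ref{mainKroo2} gives two-sided comparability of $\|\nabla P_n\|_{L^p(E_\iota)}/\|P_n\|_{L^p(E_\iota)}$ with $n^2/\epsilon_n$, not an actual limit as the paper's definition of $\sim$ literally requires; this imprecision is inherited from the paper (Theorem \ref{en} is concluded the same way), so it is not a defect introduced by your argument, but it is worth noting if one wants the statement verbatim.
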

We omit the details of the proof of Lemma \ref{cor}, as they would repeat ideas that we presented  in the proof of Theorem \ref{en}. Using (\ref{setE}), one can see that $\mu_p(E_\iota)=2\iota$  and $L^p$ Markov type inequality on $E_\iota$ does not hold with the exponent $\mu_p(E_\iota)=2\iota$. This generalizes Proposition 2.6 of \cite{BLM}.

\section*{Acknowledgment}
The author was supported by the Polish National Science Centre (NCN) Opus grant no. 2017/25/B/ST1/00906.





\begin{thebibliography}{99}



\bibitem{AM} D. Azagra, C. Mudarra, Whitney extension theorems for convex functions of the classes $C^1$ and $C^{1,\omega}$, Proc. London Math. Soc. 3 (2017) 133–158.
\bibitem{MB} M. Baran, Markov inequality on sets with polynomial parametrization, Ann. Polon. Math. 60 (1994) 69–79.
\bibitem{MB2} M. Baran, New approach to Markov inequality in $L^p$ norms, in: Approximation theory. In memory of A.K. Varma, I. Govil et al. – editors, M. Dekker, Inc., New York–Basel–Hong Kong, 75–85 (1998).
\bibitem{BLM} M. Baran, L. Bia{\l}as-Cie\.{z}, B. Mil\'{o}wka, On the best exponent in Markov’s inequality, Potential Anal. 38 (2013) 635–651.
\bibitem{BP} M. Baran,  W.  Pleśniak,  Markov’s exponent of compact sets in $\mathbb{C}^n$, Proc. Am. Math. Soc. 123 (1995) 2785–2791.
\bibitem{BE} P. Borwein, T. Erdélyi, Polynomials and Polynomial Inequalities, Springer, 1995.
\bibitem{BOS} L.P. Bos, P.D. Milman,  Sobolev–Gagliardo–Nirenberg and Markov type inequalities on subanalytic domains, Geom. Funct. Anal. 5 (1995) 853–923.
\bibitem{G2} P. Goetgheluck, Markov’s inequality on Locally Lipschitzian compact subsets of $\mathbb{R}^n$ in $L^p$-spaces, J. Approx. Theory 49 (1987) 303–310.
\bibitem{G3} P. Goetgheluck, Polynomial Inequalities on General Subsets of $\mathbb{R}^N$, Coll.Math. 57(1) (1989) 127–136.
\bibitem{G5} P. Goetgheluck, On the problem of Sharp Exponents in Multivariate Nikolski-Type Inequality, J. Approx. Theory 77 (1994) 167–178.
\bibitem{K1} A. Kro\'{o}, On Bernstein-Markov-type inequalities for multivariate polynomials in $L^q$-norm, J. Approx. Theory
159 (2009) 85–96.
\bibitem{K2} A. Kro\'{o}, Sharp $L^p$ Markov type inequality for cuspidal domains in $\mathbb{R}^d$, J. Approx. Theory
250 (2020) 105336.
\bibitem{KS} A. Kroó, J. Szabados, Bernstein-Markov type inequalities for multivariate polynomials on sets with cusps, J.
Approx. Theory 102 (2000) 72–95.

\bibitem{MM} G. V. Milovanovic, D. S. Mitrinovi\'{c}, and Th. M. Rassias, Topics in Polynomials: Extremal Problems, Inequalities, Zeros, World Sci., Singapore, 1994.
\bibitem{RS} Q. I. Rahman, G. Schmeisser, Analytic Theory of Polynomials, in London Math. Soc. Monogr. (N.S.) Oxford Univ. Press, Oxford, 2002, Vol. 26.

\bibitem{SMN} S.M. Nikolskii, On a certain method of covering domains and inequalities for multivariate polynomials,
Mathematica 8 (1966) 345–356.

\bibitem{PP} W. Paw{\l}ucki, W. Ple\'{s}niak, Markov's inequality and $C^{\infty}$ functions on sets with polynomial cusps, Math. Ann. 275 (1986) 467--480.

\bibitem{P1} W. Ple\'{s}niak, Markov's inequality and the existence of an extension operator for $C^{\infty}$ functions, J. Approx. Theory 61 (1990) 106--117.

\bibitem{P2} W. Ple\'{s}niak, Recent progress in multivariate Markov inequality, Approximation theory, Monogr. Textbooks Pure Appl. Math., Dekker, New York, (1998) 449–464.

\bibitem{Sz} G. Szeg\H{o}, Orthogonal Polynomials, AMS Colloquium Publications, v.XXIII, Providence, Rhode Island, 1939.

\end{thebibliography}


{\footnotesize

\noindent Tomasz Beberok\\
Department of Applied Mathematics,\\
University of Agriculture in Krakow,\\
ul. Balicka 253c, 30-198 Kraków, Poland\\
email: tomasz.beberok@urk.edu.pl
}
\end{document}